\documentclass[12pt,twoside]{article}
\usepackage{amsfonts}

\usepackage{amsmath,mathrsfs,amsfonts,amsthm,color}
\usepackage{graphicx,amssymb,latexsym,a4}

\setlength{\textheight}{240mm} \setlength{\textwidth}{155mm}
\oddsidemargin=1.6cm\evensidemargin=\oddsidemargin\topmargin=-1.5cm
\parindent 6mm \hoffset -9mm


\setlength{\parskip}{.1in}



\newtheorem{thm}{Theorem}[section]
\newtheorem{lem}[thm]{Lemma}
\newtheorem{cor}[thm]{Corollary}

\newtheorem{prop}[thm]{Proposition}
\newtheorem{re}[thm]{Remark}

\begin{document}

\title{\bf On a Problem of Harary and Schwenk on Graphs
with Distinct Eigenvalues\footnote{Supported by NSFC No. 11101232
and 11371205, and China Postdoctoral Science Foundation.}}

\author{
{\small Xueliang Li$^a$, \ Jianfeng Wang$^{a,b,}$\footnote{
Corresponding author. \newline{\it \hspace*{5mm}Email addresses:}
lxl@nankai.edu.cn (X. Li), jfwang4@aliyun.com (J. Wang),
 huangqx@xju.edu.cn (Q. Huang)}, \ Qiongxiang Huang$^c$}\\[2mm]
\footnotesize $^a$Center for Combinatorics and LPMC-TJKLC,
Nankai University 300071, Tianjin, Chinaa\\
\footnotesize $^b$Department of Mathematics, Qinghai Normal University,
Xining, Qinghai 810008, China\\
\footnotesize $^c$College of Mathematics and System Science,
Xinjiang University, Urumqi 830046, China}
\date{ }

\pagestyle{myheadings} \markboth{X. Li, J. Wang and Q. Huang}{A
problem of Harary and Schwenk on graphs with distinct eigenvalues}

\maketitle

\begin{abstract}
Harary and Schwenk posed the problem forty years ago: Which graphs
have distinct adjacency eigenvalues? In this paper, we obtain a
necessary and sufficient condition for an Hermitian matrix with
simple spectral radius and distinct eigenvalues. As its application,
we give an algebraic characterization to the Harary-Schwenk's
problem. As an extension of their problem, we also obtain a
necessary and sufficient condition for a positive semidefinite
matrix with simple least eigenvalue and distinct eigenvalues, which
can provide an algebraic characterization to their problem with
respect to the (normalized) Laplacian matrix.

\begin{flushleft}
{\it AMS classification:} 05C50, 15A18
\end{flushleft}

\begin{flushleft}
{\it Keywords:} Hermitian matrix; positive semidefinite matrix;
adjacency matrix; Laplacian matrix; normalized Laplacian matrix
\end{flushleft}
\end{abstract}

\section {Introduction}

Let $\mathcal{M}_{n}(\mathbb{F})$ be the set of $n$-by-$n$ matrices
with entries from a field $\mathbb{F}$, where $\mathbb{F}$ is
usually the real number field $\mathbb{R}$ or the complex number
field $\mathbb{C}$ in this paper. By ${\bf C}^n$ and ${\bf R}^n$ we
denote the $n$ dimensional {\it real vector space} and the {\it
complex vector space}, respectively. A matrix $H = [h_{ij}] \in
\mathcal{M}_{n}(\mathbb{C})$ is said to be {\it Hermitian} if
$H=H^\ast$, where $H^\ast = \overline{H}^T = [\overline{h}_{ji}]$.
It is well-known that all the eigenvalues of $H$ are real. So its
eigenvalues can be ordered as $\lambda_1 \geq \lambda_2 \cdots \geq
\lambda_n$, where the set $\sigma(H)$ of them and the largest one
$\lambda_1$ of them are, respectively, called the {\it spectrum} and
the {\it spectral radius} of $H$. Let ${\rm alg}(\lambda)$ and ${\rm
geo}(\lambda)$ be, respectively, the {\it algebraic multiplicity}
and {\it geometric multiplicity} of an eigenvalue $\lambda$. A
well-known fact is that ${\rm geo}(\lambda) \leq {\rm
alg}(\lambda)$. An eigenvalue $\lambda$ is said to be simple if
${\rm alg}(\lambda)=1$.

All graphs considered here are undirected and simple (i.e., loops
and multiple edges are not allowed). Let $G = (V(G), E(G))$ be a
graph with order $n=|V(G)|$ and size $m=|E(G)|$. Let $M=M(G)$ be a
corresponding {\it graph matrix} defined in a prescribed way. The
{\it $M$-eigenvalues} of $G$ are the eigenvalues of $M(G)$. The {\it
$M$-spectral radius} of $G$ is the largest $M$-eigenvalue of $G$. In
the literature there are several graph matrices, including the {\it
adjacency matrix} $A$, the {\it degree matrix $D$}, {\it the
Laplacian matrix} $L=D-A$, the {\it signless Laplacian matrix}
$Q=D+A$ and so on.\smallskip

Generally, most of the $A$-eigenvalues of a graph are distinct. If a
graph has only few distinct $A$-eigenvalues, then it appears that
the graph has a special structure. As an easy example, a connected
graph $G$ has one or two $A$-eigenvalues if and only if  $G$ is,
respectively, an isolated vertex or a complete graph of order at
least two. It has been shown that the strong regular graphs has
three distinct $A$-eigenvalues. This field is perhaps originally
studied by Doob \cite{doob-few}. Subsequently, van Dam makes much
important contributions to this topic \cite{dam-thesis,
dam-non-regular-three}.

On the other side, a graph whose all $A$-eigenvalues are distinct is
a long standing problem that was proposed by Harary and Schwenk
forty years ago (\cite{harary-schwenk},
see also \cite{cve1}, pp. 266):\\[-8mm]

\begin{flushleft}{\bf Harary-Schwenk Problem:}
Which graphs have distinct $A$-eigenvalues?
\end{flushleft}

\vspace{-4mm}

AS far as we know, for forty years more, there have been only two
results on this problem. The first one is due to Mowshowitz
\cite{mowshowitz}.

\begin{prop} \cite{mowshowitz}.
Let $G$ be a finite, simple and undirected graph and $G(X)$ be its
group of automorphisms. If $G$ has distinct $A$-eigenvalues, then
every nonidentity element in $G(X)$ is of order 2 (which implies
$G(X)$ is Abelian).
\end{prop}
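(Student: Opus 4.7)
The plan is to translate the graph-automorphism hypothesis into a statement about permutation matrices commuting with the adjacency matrix, and then exploit the simplicity of every eigenvalue.

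First I would recall the standard identification: an automorphism $\sigma$ of $G$ corresponds bijectively to a permutation matrix $P_\sigma$ satisfying $P_\sigma A = A P_\sigma$, where $A = A(G)$. Thus the assertion reduces to showing that every permutation matrix $P$ commuting with $A$ satisfies $P^2 = I$. Since $A$ is real symmetric with $n$ distinct eigenvalues $\lambda_1 > \lambda_2 > \cdots > \lambda_n$, each eigenspace $E_{\lambda_i}$ is one-dimensional; choose a real unit eigenvector $v_i$ spanning $E_{\lambda_i}$.

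Next I would use that any matrix commuting with $A$ must preserve each eigenspace $E_{\lambda_i}$. Hence for each $i$ there is a scalar $\mu_i$ with $P v_i = \mu_i v_i$. Two observations pin down $\mu_i$: since $P$ is a real matrix and $v_i$ is real, $\mu_i$ is real; and since $P$ is a permutation matrix (hence orthogonal), $|\mu_i| = 1$. Therefore $\mu_i \in \{+1, -1\}$, and so $P^2 v_i = v_i$ for every $i$. Because $\{v_1, \ldots, v_n\}$ is a basis of $\mathbb{R}^n$, we conclude $P^2 = I$, i.e., $\sigma^2$ is the identity automorphism. So every nonidentity automorphism has order exactly $2$.

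Finally I would invoke the elementary group-theoretic fact that a group in which every element squares to the identity is abelian: for any $\sigma, \tau \in G(X)$, the relations $\sigma^2 = \tau^2 = (\sigma\tau)^2 = e$ force $\sigma\tau = (\sigma\tau)^{-1} = \tau^{-1}\sigma^{-1} = \tau\sigma$. This yields the parenthetical conclusion and completes the argument. No step is a serious obstacle; the crux is the one-line observation that a real orthogonal matrix acting as a scalar on a real one-dimensional subspace can only act as $\pm 1$, which is precisely where the hypothesis of distinct (hence simple) eigenvalues is used.
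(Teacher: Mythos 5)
Your proposal is correct and complete: the reduction to permutation matrices commuting with $A$, the one-dimensionality of each eigenspace forcing $Pv_i=\mu_i v_i$ with $\mu_i$ a real number of modulus one, hence $\mu_i=\pm1$ and $P^2=I$, and the standard exponent-two-implies-abelian argument are all sound. Note that the paper itself gives no proof of this proposition --- it is quoted from Mowshowitz with a citation only --- so there is nothing to compare against; your argument is the classical one for this result.
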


The other one is a generalization of the above theorem, due to Chao
\cite{chao}.

\begin{prop} \cite{chao}.
Let $G$ be a finite and simple graph (directed or undirected, with
or without loops) and $G(X)$ be its group of automorphisms. If the
$A$-eigenvalues of $G$ (in the complex number field) are distinct,
then $G(X)$ is Abelian.
\end{prop}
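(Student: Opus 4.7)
The plan is to reduce the assertion to a standard linear-algebraic fact: the centralizer of a complex matrix with a simple spectrum is a commutative algebra. First, I would encode each automorphism $\sigma \in G(X)$ by its permutation matrix $P_\sigma$, defined by $(P_\sigma)_{ij}=1$ iff $\sigma(j)=i$, so that $\sigma \mapsto P_\sigma$ is an injective group homomorphism from $G(X)$ into $GL_n(\mathbb{C})$. The defining property of an automorphism of the (possibly directed, possibly looped) graph $G$ is exactly $P_\sigma A = A P_\sigma$, where $A$ is the adjacency matrix of $G$. Hence the image of $G(X)$ lies in the centralizer
\[
\mathcal{C}(A)\;:=\;\{\,M\in\mathcal{M}_{n}(\mathbb{C})\;:\;MA=AM\,\}.
\]

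Second, I would invoke the classical fact that if $A\in\mathcal{M}_{n}(\mathbb{C})$ has $n$ distinct eigenvalues, then $A$ is diagonalizable and $\mathcal{C}(A)=\mathbb{C}[A]$, the $n$-dimensional commutative algebra of polynomials in $A$. A quick justification: writing $A=SDS^{-1}$ with $D$ diagonal and of pairwise distinct diagonal entries, the relation $MA=AM$ becomes $(S^{-1}MS)D=D(S^{-1}MS)$, which forces $S^{-1}MS$ to be diagonal, and Lagrange interpolation then expresses $M$ as a polynomial in $A$. In particular, any two elements of $\mathcal{C}(A)$ commute.

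Finally, for any $\sigma,\tau\in G(X)$ the matrices $P_\sigma,P_\tau\in\mathcal{C}(A)$ commute, so
\[
P_{\sigma\tau}=P_\sigma P_\tau = P_\tau P_\sigma = P_{\tau\sigma},
\]
and injectivity of $\sigma\mapsto P_\sigma$ yields $\sigma\tau=\tau\sigma$. Therefore $G(X)$ is Abelian.

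The only subtlety to watch for is that Proposition~1.2 permits $G$ to be directed and to have loops, so $A$ need not be Hermitian and we cannot appeal to unitary diagonalization as in Mowshowitz's setting; the hypothesis of \emph{distinct} eigenvalues is nonetheless strong enough on its own to force diagonalizability over $\mathbb{C}$, which is all the centralizer argument needs. That is the only step that deserves a careful word, the remainder is formal.
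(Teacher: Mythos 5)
Your proof is correct and complete; the paper states this proposition without proof (it is quoted from Chao's 1971 note), and the centralizer argument you give --- distinct complex eigenvalues force $A$ to be diagonalizable with commutative centralizer $\mathbb{C}[A]$, so the permutation matrices of all automorphisms commute and the injective homomorphism $\sigma \mapsto P_\sigma$ transfers commutativity back to $G(X)$ --- is essentially Chao's original argument. Your closing remark is also the right one to make: since $G$ may be directed or have loops, $A$ need not be symmetric, and it is precisely the hypothesis of distinct eigenvalues (rather than any Hermitian structure) that supplies diagonalizability, which is exactly what separates this statement from Mowshowitz's undirected version.
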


Both of the above two results are related to group theory. In this
paper, we will, however, use the theory of Hermitian matrices to
investigate the Harary-Schwenk Problem. The organization of the
paper is as follows: In Section 2 we study the Hermitian matrices
with simple spectral radius and distinct eigenvalues. In Section 3
we give a complete algebraic characterization for the Harary-Schwenk
Problem. In Section 4 we discuss the positive semidefinite matrix
with simple least eigenvalue and distinct eigenvalues, and extend
the Harary-Schwenk Problem to the (normalized) Laplacian matrix.

\section{Hermitian matrices with simple spectral radius}

First of all, let us recall some important properties of Hermitian
matrices. To start with, the following one is simple but helpful
(see \cite{horn-johnson-book}, Theorem 1.1.6).

\begin{prop}\label{Blambda-fBlambda}
Let $B \in \mathcal{M}_n(\mathbb{F})$ and $g(\cdot)$ be a given
polynomial. If $\alpha$ is an eigenvector of $B$ associated with
$\lambda$, then $\alpha$ is an eigenvector of $g(B)$ associated with
$g(\lambda)$.
\end{prop}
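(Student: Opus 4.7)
The plan is to reduce the polynomial identity to the monomial case by induction and then extend by linearity. Since the statement is purely algebraic, the field $\mathbb{F}$ plays no special role, and no Hermitian structure is needed.

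First I would write $g(x)=\sum_{k=0}^{d} c_k x^k$ with $c_k\in\mathbb{F}$. The base identity is the hypothesis $B\alpha=\lambda\alpha$, together with the trivial case $B^{0}\alpha = \alpha = \lambda^{0}\alpha$. I would then prove by induction on $k\ge 0$ that $B^{k}\alpha=\lambda^{k}\alpha$: assuming $B^{k}\alpha=\lambda^{k}\alpha$, one applies $B$ on the left and uses scalar-commutativity to get
\[
B^{k+1}\alpha = B(B^{k}\alpha) = B(\lambda^{k}\alpha) = \lambda^{k}(B\alpha) = \lambda^{k+1}\alpha.
\]

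Finally, using the definition of $g(B)=\sum_{k=0}^{d} c_k B^{k}$ and the linearity of matrix-vector multiplication, I would write
\[
g(B)\alpha = \sum_{k=0}^{d} c_k B^{k}\alpha = \sum_{k=0}^{d} c_k \lambda^{k}\alpha = \Bigl(\sum_{k=0}^{d} c_k \lambda^{k}\Bigr)\alpha = g(\lambda)\alpha,
\]
which is the desired conclusion. Note that since $\alpha$ is an eigenvector it is nonzero, so this really does exhibit $\alpha$ as an eigenvector of $g(B)$ rather than merely as a vector satisfying a trivial equation.

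There is no substantive obstacle here; the only point requiring care is bookkeeping in the induction step and the observation that eigenvectors are nonzero by convention, so that $g(\lambda)\alpha$ genuinely identifies $\alpha$ as an eigenvector of $g(B)$ with eigenvalue $g(\lambda)$.
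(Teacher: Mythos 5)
Your proof is correct and is the standard argument: the paper itself gives no proof of this proposition, citing it as Theorem 1.1.6 of Horn and Johnson, and your induction-on-monomials-plus-linearity derivation is exactly the textbook route, including the correct observation that $\alpha\neq 0$ is what makes the identity $g(B)\alpha=g(\lambda)\alpha$ an eigenvector statement. Nothing is missing.
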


As pointed out in \cite{horn-johnson-book}, provided that $H$ and
$N$ are Hermitian, $H^k$ ($k=1,2,3,\cdots$) and $aH+bN$ ($a,b \in
\mathbb{R}$) are also Hermitian, then the result below is implied.

\begin{prop}\label{BH-fBH}
Let $H \in \mathcal{M}_n(\mathbb{C})$ be an Hermitian matrix and
$g(\cdot)$ be a real polynomial. Then $g(H)$ is Hermitian.
\end{prop}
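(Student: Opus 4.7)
The plan is to verify directly that $g(H)^* = g(H)$. Write $g(x) = \sum_{k=0}^d a_k x^k$ with each $a_k \in \mathbb{R}$, so that $g(H) = \sum_{k=0}^d a_k H^k$ (with the convention $H^0 = I$). I would then take the conjugate transpose term by term and exploit the two auxiliary facts flagged in the remark just before the proposition.

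First I would establish the two facts explicitly. The fact that each power $H^k$ is Hermitian follows by induction on $k$: the base $k=1$ is the hypothesis $H^* = H$, and the step uses $(H^{k+1})^* = (H\cdot H^k)^* = (H^k)^* H^* = H^k H = H^{k+1}$. The fact that any real linear combination of Hermitian matrices is Hermitian is the one-line computation $(aH+bN)^* = \bar a\, H^* + \bar b\, N^* = aH + bN$, where the reality of $a,b$ is used crucially to replace $\bar a, \bar b$ by $a,b$.

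Given these, the proposition reduces to iterated application of the second fact. By induction on the degree, each partial sum $\sum_{k=0}^{j} a_k H^k$ is a real linear combination of the Hermitian matrix $\sum_{k=0}^{j-1} a_k H^k$ and the Hermitian matrix $H^j$, hence Hermitian; taking $j=d$ gives the claim. Equivalently, one may compute in a single line $g(H)^* = \sum_{k=0}^d \overline{a_k}(H^*)^k = \sum_{k=0}^d a_k H^k = g(H)$.

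There is no real obstacle; the only point requiring care is to make explicit that the hypothesis ``$g$ is a \emph{real} polynomial'' is what allows the replacement $\overline{a_k} = a_k$. Dropping this would only yield $g(H)^* = \bar g(H)$, where $\bar g$ denotes the polynomial obtained by conjugating the coefficients, and $g(H)$ would generally fail to be Hermitian.
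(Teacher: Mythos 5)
Your proof is correct and follows essentially the same route as the paper, which simply cites the two facts that powers $H^k$ of a Hermitian matrix and real linear combinations $aH+bN$ of Hermitian matrices are Hermitian and declares the proposition ``implied.'' You have merely filled in the inductions and the term-by-term conjugate-transpose computation, correctly highlighting that the reality of the coefficients is the essential hypothesis.
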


The next one is the {\it spectral theorem for Hermitian matrices}
\cite{horn-johnson-book}.

\begin{prop}
Let $H \in \mathcal{M}_{n}(\mathbb{\mathbb{C}})$ be given. Then $H$
is Hermitian if and only if there is a unitary matrix $U\in
\mathcal{M}_{n}(\mathbb{\mathbb{C}})$ and a real diagonal matrix
$\Lambda \in \mathcal{M}_{n}(\mathbb{\mathbb{C}})$ such that $H =
U\Lambda U^\ast$. Moveover, $H$ is real Hermitian (i.e., real
symmetric) if and only if there is a real orthogonal matrix $B \in
M_n$ and a real diagonal matrix $\Lambda \in
\mathcal{M}_{n}(\mathbb{\mathbb{C}})$ such that $A = B\Lambda
B^\ast$.
\end{prop}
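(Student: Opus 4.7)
The plan is to prove both directions of the equivalence, using Schur's unitary triangularization as the essential technical tool.

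The reverse direction is immediate. If $H = U\Lambda U^\ast$ with $U$ unitary and $\Lambda$ a real diagonal matrix, then $H^\ast = (U\Lambda U^\ast)^\ast = U\Lambda^\ast U^\ast = U\Lambda U^\ast = H$, since $\Lambda$ is real and diagonal (so $\Lambda^\ast = \Lambda$). Hence $H$ is Hermitian.

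For the forward direction, I would invoke Schur's triangularization theorem: for every $H \in \mathcal{M}_n(\mathbb{C})$ there exists a unitary $U \in \mathcal{M}_n(\mathbb{C})$ such that $U^\ast H U = T$ is upper triangular, with the eigenvalues of $H$ on the diagonal. (This is proved inductively by picking a unit eigenvector, extending to a unitary basis, and applying the induction hypothesis to the $(n-1)\times(n-1)$ trailing block.) Assuming $H$ is Hermitian, the matrix $T = U^\ast H U$ satisfies $T^\ast = U^\ast H^\ast U = U^\ast H U = T$, so $T$ is itself Hermitian. But an upper triangular Hermitian matrix is forced to be diagonal, because entries strictly below the diagonal vanish (triangularity) and their conjugates — the entries strictly above the diagonal — must therefore also vanish; moreover, the diagonal entries $t_{ii}$ satisfy $t_{ii} = \overline{t_{ii}}$, hence are real. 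Setting $\Lambda := T$ yields $H = U\Lambda U^\ast$ with $\Lambda$ real and diagonal, completing the equivalence.

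For the real case, when $H$ is real symmetric, the eigenvalues are real (as just shown) and the characteristic polynomial has real coefficients with only real roots. For each eigenvalue $\lambda$, the eigenspace $\ker(H - \lambda I)$ is the kernel of a real matrix and therefore admits a real basis; applying Gram–Schmidt in $\mathbb{R}^n$ produces a real orthonormal basis of this eigenspace. Eigenspaces for distinct eigenvalues of a real symmetric matrix are orthogonal in the real inner product, so concatenating these real orthonormal bases gives a real orthogonal matrix $B$ with $B^{\mathsf T}HB = \Lambda$ real diagonal, i.e., $H = B\Lambda B^\ast$ (where $B^\ast = B^{\mathsf T}$). Conversely, if $H = B\Lambda B^\ast$ with $B$ real orthogonal and $\Lambda$ real diagonal, then $H$ is manifestly real and Hermitian, hence real symmetric.

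The main obstacle is Schur's triangularization; once it is available, the rest is a short algebraic check. The secondary subtlety is the real case, where one must verify that the eigenvectors can actually be chosen in $\mathbb{R}^n$ rather than merely in $\mathbb{C}^n$ — this rests on the observation that $H - \lambda I$ has real entries when $\lambda$ is a real eigenvalue of a real symmetric matrix, so its kernel is a real subspace of the correct dimension.
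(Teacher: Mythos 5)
Your proof is correct: the paper states this proposition without proof, citing it as the spectral theorem for Hermitian matrices from Horn and Johnson, and your argument via Schur's unitary triangularization (upper triangular plus Hermitian forces real diagonal) is precisely the standard proof given in that reference. The real case is also handled soundly, the one point worth making fully explicit being that $\ker_{\mathbb{R}}(H-\lambda I)$ and $\ker_{\mathbb{C}}(H-\lambda I)$ have equal dimension because the rank of a real matrix is unchanged by extending the field, so the real eigenspaces do span all of $\mathbb{R}^n$.
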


The above proposition shows that if $H$ is Hermitian or real
symmetric, then $H$ is diagonalizable.  For the diagonalization
matrices, we have the following judgements from the matrix theory
(see, for example, \cite{horn-johnson-book}).

\begin{prop}\label{matrix-theory-diag}
Let $B \in M_n(\mathbb{C})$ and $g(\cdot)$ be a  polynomial.\\[-8mm]
\begin{enumerate}
\item[$\mathrm{(i)}$]
If $B$ is diagonalizable, then $g(B)$ is also diagonalizable;\\[-6mm]
\item[$\mathrm{(ii)}$]
$B$ is diagonalizable if and only if the eigenvalues of $B$ are in
$\mathbb{C}$, and for each eigenvalue $\lambda$ of $B$, ${\rm
alg(\lambda)} = {\rm geo}(\lambda)$.\\[-6mm]
\item[$\mathrm{(iii)}$]
Let all the distinct eigenvalues of $B$ be
$\lambda_1,\lambda_2,\cdots,\lambda_k$. Then $B$ is diagonalizable
if and only if its mimimal polynomial
$m(x)=(x-\lambda_1)(x-\lambda_2)\cdots(x-\lambda_k)$.
\end{enumerate}
\end{prop}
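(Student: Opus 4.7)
The plan is to prove the three parts in order: (i) is a one-line computation, (ii) is a dimension count, and (iii) needs the minimal-polynomial and primary-decomposition machinery, which is where the real work lies.

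For part (i), I would diagonalize $B=PDP^{-1}$ and note that $B^k=PD^kP^{-1}$ for every nonnegative integer $k$, so by linearity $g(B)=P\,g(D)\,P^{-1}$. Since $g(D)$ is diagonal (its $(i,i)$-entry is $g(d_{ii})$), the same similarity diagonalizes $g(B)$.

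For part (ii), in the forward direction the columns of the diagonalizing matrix $P$ form a basis of eigenvectors, and counting those that correspond to a fixed diagonal entry $\lambda$ of $D$ gives $\mathrm{geo}(\lambda)\ge\mathrm{alg}(\lambda)$; combined with the standing inequality $\mathrm{geo}(\lambda)\le\mathrm{alg}(\lambda)$ mentioned in the introduction, this forces equality. Conversely, if all eigenvalues lie in $\mathbb{C}$ and the two multiplicities agree at every eigenvalue, then the geometric multiplicities sum to $n$; since eigenspaces attached to distinct eigenvalues are linearly independent, concatenating bases of these eigenspaces produces a basis of $\mathbb{C}^n$ consisting of eigenvectors, so $B$ is diagonalizable.

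The main obstacle is the converse of part (iii). The forward direction is immediate from (i): letting $p(x)=\prod_{i=1}^{k}(x-\lambda_i)$, the computation in (i) gives $p(B)=P\,p(D)\,P^{-1}=0$, because each diagonal entry of $D$ is some $\lambda_i$; thus the minimal polynomial $m(x)$ divides $p(x)$, and since every $\lambda_i$ must also be a root of $m(x)$, equality follows. For the converse, assuming $m(x)=\prod_{i=1}^{k}(x-\lambda_i)$ with distinct roots, I would exploit the fact that the factors $x-\lambda_i$ are pairwise coprime and invoke Bezout's identity (applied inductively) to obtain polynomials $q_i(x)$ with $\sum_{i=1}^{k} q_i(x)\prod_{j\ne i}(x-\lambda_j)=1$. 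Evaluating this identity at $B$ produces operators $E_i:=q_i(B)\prod_{j\ne i}(B-\lambda_j I)$ which sum to the identity and whose images lie in $\ker(B-\lambda_i I)$, since $(B-\lambda_i I)E_i=q_i(B)\,m(B)=0$. This exhibits $\mathbb{C}^n=\bigoplus_{i=1}^{k}\ker(B-\lambda_i I)$, so stacking bases of the eigenspaces yields a basis of eigenvectors, and $B$ is diagonalizable by the converse already proved in part (ii).
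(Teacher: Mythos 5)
Your proof is correct. Note, however, that the paper does not prove this proposition at all: it is stated as a known fact from matrix theory with a citation to Horn and Johnson, so there is no in-paper argument to compare against. What you have written is the standard textbook treatment --- functional calculus through the diagonalization for (i), the dimension count on eigenspaces for (ii), and the partial-fraction/Bezout construction of the spectral projections $E_i=q_i(B)\prod_{j\neq i}(B-\lambda_jI)$ for the converse of (iii) --- and all the steps check out. One tiny point worth making explicit in the forward direction of (iii): the reason every $\lambda_i$ is a root of $m(x)$ is that applying $m(B)=O$ to an eigenvector $v$ for $\lambda_i$ gives $m(\lambda_i)v=0$ with $v\neq 0$; with that said, $m(x)$ divides $p(x)$, is divisible by each $x-\lambda_i$, and both are monic, so they coincide.
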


As we know, the rank of an Hermitian $H$ is the number of non-zero
eigenvalues of $H$.  While if the rank is one, the matrix can be
expressed as the following form \cite{horn-johnson-book}.

\begin{prop}\label{matrix-rank-one}
Let $B \in M_n(\mathbb{C})$. Then $B$ has rank one if and only if
there exist two non-zero $n$-vectors ${\bf x}, {\bf y} \in {\bf
C}^n$ such that $B = {\bf x}{\bf y}^*$. Moreover, $B{\bf x} = ({\bf
y}^*{\bf x}){\bf x}$, where ${\bf y}^* = \overline{{\bf y}}^T$.
\end{prop}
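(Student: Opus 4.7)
The plan is to prove the two directions of the equivalence separately, and then verify the ``moreover'' clause by a one-line computation. Neither direction requires any of the Hermitian machinery assembled earlier; the statement is purely a fact about the column space of a rank-one matrix.

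For the forward direction, suppose $B$ has rank one. Then its column space is one-dimensional, so I can pick any non-zero column of $B$ and call it $\mathbf{x} \in \mathbb{C}^n$; every column of $B$ is then a scalar multiple of $\mathbf{x}$. Denoting by $\overline{y_j}$ the scalar with which the $j$-th column equals $\overline{y_j}\mathbf{x}$, and setting $\mathbf{y} = (y_1, \ldots, y_n)^T$, I get $B = \mathbf{x}\mathbf{y}^*$ by comparing columns. Since $B \ne 0$ (rank one forbids the zero matrix), $\mathbf{y}$ cannot be zero, and $\mathbf{x}$ was chosen non-zero, so both vectors are non-zero as required.

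For the converse, assume $B = \mathbf{x}\mathbf{y}^*$ with $\mathbf{x}, \mathbf{y}$ non-zero. Then the $j$-th column of $B$ equals $\overline{y_j}\mathbf{x}$, so every column lies in the line $\mathrm{span}\{\mathbf{x}\}$, giving $\mathrm{rank}(B) \le 1$. Since $\mathbf{y} \ne 0$, some $y_j \ne 0$, so the $j$-th column is non-zero and $\mathrm{rank}(B) \ge 1$. Hence $\mathrm{rank}(B) = 1$.

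Finally, for the ``moreover'' statement, because $\mathbf{y}^*\mathbf{x}$ is a scalar in $\mathbb{C}$, associativity of matrix multiplication gives
\[
B\mathbf{x} = (\mathbf{x}\mathbf{y}^*)\mathbf{x} = \mathbf{x}(\mathbf{y}^*\mathbf{x}) = (\mathbf{y}^*\mathbf{x})\mathbf{x}.
\]
There is no genuine obstacle in this proof; the only point that needs mild care is ensuring that the scalars $\overline{y_j}$ extracted in the forward direction are complex-conjugated correctly so that $B = \mathbf{x}\mathbf{y}^*$ (rather than $\mathbf{x}\mathbf{y}^T$) comes out in the adjoint form stated in the proposition.
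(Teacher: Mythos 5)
Your proof is correct and complete: the column-space argument for both directions and the associativity computation for the ``moreover'' clause are exactly right, including the care taken with the conjugation so that the factorization comes out as ${\bf x}{\bf y}^*$ rather than ${\bf x}{\bf y}^T$. The paper itself states this proposition without proof, citing Horn and Johnson, and your argument is precisely the standard one that reference relies on, so there is nothing to add or compare.
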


Let $I$ and $O$ be the {\it identity matrix} and the {\it zero
matrix} in $\mathcal{M}_{n}(\mathbb{F})$, respectively. For a vector
$\alpha \in {\bf C}^n$, let $\|\alpha\|_2$ be the Euclidean norm,
that is, $\|\alpha\|_2^2 = \alpha^\ast\alpha$. We are now in the
stage to show the following main result of this section.

\begin{thm}\label{C-distinct}
Let $H \in \mathcal{M}_{n}(\mathbb{C})$ be a Hermitian matrix with
simple spectral radius. Then $H$ has exactly $k$ $(2 \leq k \leq n)$
distinct eigenvalues if and only if there are $k$ distinct real
numbers $\lambda_1, \lambda_2, \cdots,\lambda_k$ satisfying\\[-7mm]
\begin{itemize}
\item[$\mathrm{(i)}$]
$H-\lambda_iI$ is a singular matrix for $2 \leq i \leq k$;
\item[$\mathrm{(ii)}$]
$\prod_{i=2}^k(H-\lambda_iI) = b{\bf y}{\bf y}^\ast$ and $H{\bf y} =
\lambda_1{\bf y}$, where $b \in \mathbb{C}\backslash \{0\}$ and
${\bf y} \in {\bf
C}^n \backslash \{\bf 0\}$.\\[-9mm]
\end{itemize}
Moreover, $\lambda_1, \lambda_2, \cdots,\lambda_k$ are exactly the
$k$ distinct eigenvalues of $G$.
\end{thm}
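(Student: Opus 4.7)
The plan is to exploit the fact that $H$ is diagonalizable by the spectral theorem and that, by Proposition~\ref{matrix-theory-diag}(iii), its minimal polynomial is the product of distinct linear factors over its spectrum, so polynomial identities in $H$ can be read off from the spectrum via Proposition~\ref{Blambda-fBlambda}. The central object in both directions will be $P:=g(H)$ where $g(x)=\prod_{i=2}^{k}(x-\lambda_i)$; this $P$ is Hermitian by Proposition~\ref{BH-fBH}.

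For the forward direction, I would assume $H$ has exactly $k$ distinct eigenvalues and label them so that $\lambda_1$ is the simple spectral radius. Condition~(i) is then immediate. Next I would pick a unit eigenvector $\mathbf{y}$ for $\lambda_1$ and complete it to an orthonormal eigenbasis of $H$ furnished by the spectral theorem. For every basis vector $\mathbf{v}$ other than $\mathbf{y}$, Proposition~\ref{Blambda-fBlambda} yields $P\mathbf{v}=g(\mu)\mathbf{v}=0$ because $\mu\in\{\lambda_2,\ldots,\lambda_k\}$, whereas $P\mathbf{y}=b\mathbf{y}$ with $b:=g(\lambda_1)=\prod_{i=2}^{k}(\lambda_1-\lambda_i)\neq 0$ by distinctness. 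Writing $H=U\Lambda U^{\ast}$ with $\mathbf{y}$ as the column of $U$ corresponding to $\lambda_1$, the identity $P=U\,g(\Lambda)\,U^{\ast}$ collapses to $b\mathbf{y}\mathbf{y}^{\ast}$, which verifies (ii) and also settles $H\mathbf{y}=\lambda_1\mathbf{y}$.

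For the backward direction, I would assume (i) and (ii). Condition~(i) already exhibits $\lambda_2,\ldots,\lambda_k$ as eigenvalues, and the clause $H\mathbf{y}=\lambda_1\mathbf{y}$ in~(ii) supplies $\lambda_1$. The only remaining task is to exclude extraneous eigenvalues, which I would do by evaluating both sides of the identity in (ii) on an arbitrary eigenvector $\mathbf{v}$ of $H$ with eigenvalue $\mu$:
\[
\prod_{i=2}^{k}(\mu-\lambda_i)\,\mathbf{v}\;=\;b(\mathbf{y}^{\ast}\mathbf{v})\,\mathbf{y}.
\]
If $\mathbf{v}$ is not a scalar multiple of $\mathbf{y}$, the two vectors are linearly independent, so the scalar on the left must vanish, giving $\mu\in\{\lambda_2,\ldots,\lambda_k\}$. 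Otherwise $\mathbf{v}\parallel\mathbf{y}$ and necessarily $\mu=\lambda_1$. Hence the spectrum of $H$ equals $\{\lambda_1,\ldots,\lambda_k\}$, which also yields the ``moreover'' clause.

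The step I expect to require the most care is the rank-one reconstruction at the end of the forward direction: knowing only that $P$ acts as the scalar $b$ on $\mathbf{y}$ and annihilates $\mathbf{y}^{\perp}$ is one thing, but extracting the compact factorization $P=b\mathbf{y}\mathbf{y}^{\ast}$ relies on the Hermitian spectral decomposition together with the normalization $\|\mathbf{y}\|_2=1$ (the shape $\mathbf{x}\mathbf{y}^{\ast}$ for rank-one matrices is guaranteed by Proposition~\ref{matrix-rank-one}, but the explicit constant requires unit length). An unnormalized $\mathbf{y}$ would force the constant to become $g(\lambda_1)/\|\mathbf{y}\|_2^{2}$, which is harmless but must be bookkept; choosing $\mathbf{y}$ to be a unit vector from the outset avoids that nuisance.
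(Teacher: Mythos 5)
Your proposal is correct, and it is organized around the same central object as the paper's proof: the polynomial $g(x)=\prod_{i=2}^{k}(x-\lambda_i)$ evaluated at $H$, which both arguments show is the rank-one Hermitian matrix $b\mathbf{y}\mathbf{y}^{\ast}$ supported on the $\lambda_1$-eigenspace. The differences are in execution. For necessity, the paper first computes that $g(H)$ has rank one, invokes the generic factorization $g(H)=\mathbf{x}\mathbf{y}^{\ast}$ of Proposition~\ref{matrix-rank-one}, identifies $\mathbf{x}=b\mathbf{y}$ via the simplicity of the nonzero eigenvalue of $g(H)$, and only afterwards extracts $H\mathbf{y}=\lambda_1\mathbf{y}$ by combining $g(H)=b\mathbf{y}\mathbf{y}^{\ast}$ with the minimal polynomial identity $m(H)=O$; you instead take $\mathbf{y}$ to be a unit $\lambda_1$-eigenvector from the outset and read off $g(H)=g(\lambda_1)\mathbf{y}\mathbf{y}^{\ast}$ directly from the spectral decomposition $g(H)=U\,g(\Lambda)\,U^{\ast}$. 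Your route is shorter, makes the clause $H\mathbf{y}=\lambda_1\mathbf{y}$ true by construction rather than a derived fact, and exhibits the constant $b=\prod_{i=2}^{k}(\lambda_1-\lambda_i)>0$ explicitly (the paper only records this later, in the remark following Lemma~\ref{C-S-distinct}); what is lost is the by-product the authors care about, namely the derivation from \eqref{fhf=flambdaby} to \eqref{hy=lambday} that powers their unlabeled corollary on the partial converse of Proposition~\ref{Blambda-fBlambda} at the end of Section~2. For sufficiency, the paper argues that (ii) forces $g(H)$ to have rank one, hence only one nonzero eigenvalue, contradicting a hypothetical $(k+1)$-st distinct eigenvalue; your evaluation of the identity in (ii) on an arbitrary eigenvector $\mathbf{v}$ and the appeal to linear independence of $\mathbf{v}$ and $\mathbf{y}$ is an equivalent but more elementary way of reaching the same conclusion, and it transparently handles the case $g(\mu)=g(\lambda_1)$ without any multiplicity bookkeeping. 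Your closing remark is also accurate: since the theorem only demands $b\in\mathbb{C}\setminus\{0\}$ and $\mathbf{y}\neq\mathbf{0}$, normalizing $\mathbf{y}$ is a harmless convenience, and an unnormalized choice merely changes the constant to $g(\lambda_1)/\|\mathbf{y}\|_2^{2}$.
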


\begin{proof}[\rm \bf Proof]
We first show the necessity. Let $\lambda_1 > \lambda_2 > \cdots >
\lambda_k$ be the $k$ distinct eigenvalues of $H$. Then, $0$ is an
eigenvalue of $H-\lambda_iI$ ($2 \leq i \leq k$) and thus (i)
follows. For (ii), since $\lambda_1$ is simple, then ${\rm
alg}(\lambda_i) = m_i$ $(i=2,3,\cdots,k)$ verifies that
\begin{equation}\label{mi=n-1}
m_2+m_3+\cdots+m_k=n-1.
\end{equation}
Let $f(x) = \prod_{i=2}^k(x-\lambda_i)$. Clearly, $f(x)$ is a real
polynomial. So by Proposition \ref{BH-fBH}, $$f(H)=
\prod_{i=2}^k(H-\lambda_iI)$$ is Hermitian. From Proposition
\ref{Blambda-fBlambda} and \eqref{mi=n-1}, it follows that the
eigenvalues of $f(H)$ are $f(\lambda_1)$ with ${\rm
alg}(f(\lambda_1))=1$ and $f(\lambda_i) = 0$ ($2 \leq i \leq k$)
with ${\rm alg}(0)=n-1$. Hence, the rank of $f(H)$ is one. In line
with Proposition \ref{matrix-rank-one}, there exist two non-zero
$n$-vectors ${\bf x}, {\bf y} \in {\bf C}^n$ such that
\begin{equation}\label{fh=xy}
f(H) = {\bf x}{\bf y}^\ast \quad {\mbox and} \quad f(H){\bf x} =
({\bf y}^\ast{\bf x}){\bf x}.
\end{equation}
The second one of \eqref{fh=xy} indicates that ${\bf y}^\ast{\bf x}$
is just the only one non-zero eigenvalue of $f(H)$. Actually, ${\bf
y}^\ast{\bf x}=f(\lambda_1)$. Due to the first one of \eqref{fh=xy},
we get ${\bf y}^\ast f(H) = f(\lambda_1){\bf y}^\ast$ which leads to
\begin{equation}\label{fhf=flambdaby}
f(H){\bf y} = f(H)^\ast{\bf y} = f(\lambda_1){\bf y}.
\end{equation}
Thereby, both ${\bf x}$ and ${\bf y}$ are the eigenvectors of $f(H)$
associated with eigenvalue $f(\lambda_1)$. By Proposition
\ref{matrix-theory-diag}(i), we obtain that $f(H)$ is
diagonalizable, and thus ${\rm geo}({\bf y}^\ast{\bf x}) = {\rm
alg}({\bf y}^\ast{\bf x}) = 1$ by Proposition
\ref{matrix-theory-diag} (ii). Hence, there exists a non-zero number
$b \in \mathbb{C}$ such that ${\bf x} = b{\bf y}$. Therefore,
\begin{equation}\label{fh=byy}
f(H) = b{\bf y}{\bf y}^\ast, \;\;\mbox{where $b \in
\mathbb{C}\backslash \{0\}$ and ${\bf y} \in {\bf C}^n \backslash
\{\bf 0\}$}.
\end{equation}
In light of Proposition \ref{matrix-theory-diag} (iii), the minimal
polynomial of $H$ is $m(x) = \prod_{i=1}^k(x-\lambda_i)$, and so
$m(H) = \prod_{i=1}^k(H-\lambda_iI) = O$ which together with
\eqref{fh=byy} results in
\begin{equation}\label{hyyy=0}
b(H-\lambda_1I){\bf y}{\bf y}^\ast=O, \;\; \mbox{that is,} \;\;\;
\|{\bf y}\|_2^2(H-\lambda_1I){\bf y}=O.
\end{equation}
Since ${\bf y} \neq O$, then $\|{\bf y}\|_2^2 > 0$ which along with
\eqref{hyyy=0} forces to $(H-\lambda_1I){\bf y}=0$, and consequently
\begin{equation}\label{hy=lambday}
H{\bf y} =\lambda_1{\bf y}.
\end{equation}

For the sufficiency, from (i) it follows that the system of
homogeneous linear equations $(H-\lambda_iI){\bf x}=O$ has a
non-zero solution, say $\alpha_i$, and thus $H\alpha_i =
\lambda_i\alpha_i$ which indicates that $\lambda_i$ is an eigenvalue
of matrix $H$ ($2 \leq i \leq k$). From (ii) we get that $\lambda_1$
is an eigenvalue of $H$. So far, we have shown that $H$ has $k$
distinct eigenvalues $\lambda_1, \lambda_2, \cdots, \lambda_k$.
Assume that $H$ has an extra eigenvalue $\lambda_{k+1}$. Recall that
$f(x) = \prod_{i=2}^k(x-\lambda_i)$, and that $f(\lambda_i)$ ($1\leq
i \leq k+1$) is the eigenvalue of $f(H)$. Obviously, $f(\lambda_1)
\neq 0$, $f(\lambda_i) = 0$ ($2 \leq i \leq k$) and
$f(\lambda_{k+1}) \neq 0$. By (ii) and Proposition
\ref{matrix-rank-one}, the rank of $f(H)$ is one, and so $f(H)$ has
only one none-zero eigenvalue, a contradiction.

This completes the proof.
\end{proof}

Let $\mathbb{R}^+$ be the set of positive real numbers and ${\rm
tr}(B)$ be the trace of matrix $B$.

\begin{cor}\label{R-distinct}
Let $H \in \mathcal{M}_{n}(\mathbb{R})$ be a real symmetric matrix
with simple spectral radius. Then $H$ has exactly $k$ $(2 \leq k
\leq n)$ distinct eigenvalues if and only if there are $k$ distinct
real numbers $\lambda_1, \lambda_2, \cdots,\lambda_k$ satisfying\\[-7mm]
\begin{itemize}
\item[$\mathrm{(i)}$]
$H-\lambda_iI$ is a singular matrix for $2 \leq i \leq k$;
\item[$\mathrm{(ii)}$]
$\prod_{i=2}^k(H-\lambda_iI) = b{\bf y}{\bf y}^T$, where $b \in
\mathbb{R}^+$, $H{\bf y}=\lambda_1{\bf y}$ and ${\bf y} \in {\bf
R}^n \backslash \{\bf 0\}$.\\[-9mm]
\end{itemize}
Moreover, $\lambda_1, \lambda_2, \cdots,\lambda_k$ are exactly the
$k$ distinct $A$-eigenvalues of $G$.
\end{cor}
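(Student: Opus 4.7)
The plan is to reduce the corollary to Theorem \ref{C-distinct} and then upgrade the two complex-valued features of the conclusion, namely that ${\bf y}$ can be taken in $\mathbb{R}^n$ and that $b$ lies in $\mathbb{R}^+$ rather than merely in $\mathbb{C}\setminus\{0\}$, using that $H$ is real symmetric. Since any real symmetric matrix is Hermitian, Theorem \ref{C-distinct} immediately gives item (i) and provides the rank-one identity $f(H)=\prod_{i=2}^{k}(H-\lambda_i I)=b'\,{\bf z}{\bf z}^{\ast}$ with $H{\bf z}=\lambda_1{\bf z}$ for some $b'\in\mathbb{C}\setminus\{0\}$ and ${\bf z}\in \mathbb{C}^n\setminus\{{\bf 0}\}$.

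To realize ${\bf y}$ as a real vector, I would argue as follows. Since $\lambda_1$ is a simple eigenvalue of the real matrix $H$, the eigenspace $\ker(H-\lambda_1 I)$ is one-dimensional and has a real basis; let ${\bf y}$ be a non-zero vector in this real eigenspace. By Proposition \ref{BH-fBH}, $f(H)$ is real symmetric, and the rank-one conclusion of Theorem \ref{C-distinct} tells us that $f(H)$ is a real, rank-one, symmetric matrix whose unique non-zero eigenvalue $f(\lambda_1)$ has ${\bf y}$ as an eigenvector (by Proposition \ref{Blambda-fBlambda}). The spectral decomposition of real symmetric rank-one matrices then forces $f(H)=b\,{\bf y}{\bf y}^{T}$ for some real scalar $b$.

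To pin down the sign of $b$, I would take traces of both sides. The eigenvalues of $f(H)$ are $f(\lambda_1)$ (simple) together with $0$ of multiplicity $n-1$, so ${\rm tr}(f(H))=f(\lambda_1)$; on the other hand ${\rm tr}(b\,{\bf y}{\bf y}^{T})=b\,\|{\bf y}\|_{2}^{2}$. Hence $b=f(\lambda_1)/\|{\bf y}\|_{2}^{2}$, and since $\lambda_1$ is the spectral radius (strictly larger than every $\lambda_i$ for $i\geq 2$), the product $f(\lambda_1)=\prod_{i=2}^{k}(\lambda_1-\lambda_i)$ is a product of positive reals, giving $b\in \mathbb{R}^+$. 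This is really the only substantive new ingredient beyond the complex case; locating $b$ on the correct side of $0$ is the main obstacle, and the trace identity together with the fact that the spectral radius is defined as the \emph{largest} eigenvalue (not largest in absolute value) resolves it cleanly.

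The sufficiency direction is essentially verbatim from the proof of Theorem \ref{C-distinct}: condition (i) yields the eigenvalues $\lambda_2,\dots,\lambda_k$, the identity in (ii) makes $\lambda_1$ an eigenvalue of $H$ with eigenvector ${\bf y}$, and any hypothetical extra eigenvalue $\lambda_{k+1}$ would, via Proposition \ref{Blambda-fBlambda}, produce a second non-zero eigenvalue of $f(H)$, contradicting the rank-one structure of $b\,{\bf y}{\bf y}^{T}$. No additional difficulties arise from the real setting, so the corollary follows.
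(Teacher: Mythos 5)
Your proposal is correct and follows essentially the same route as the paper: reduce to Theorem \ref{C-distinct}, use that $H$ is real to extract a real eigenvector ${\bf y}$ for the simple eigenvalue $\lambda_1$, and compare ${\rm tr}(f(H))=f(\lambda_1)>0$ with ${\rm tr}(b{\bf y}{\bf y}^T)=b\|{\bf y}\|_2^2$ to force $b>0$. The only cosmetic difference is that you compute $b=f(\lambda_1)/\|{\bf y}\|_2^2$ directly while the paper assumes $b<0$ and derives a trace contradiction; your version incidentally recovers the explicit value of $b$ that the paper only records later in a remark.
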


\begin{proof}[\bf Proof.]
By Theorem \ref{C-distinct} we only need to prove $b \in
\mathbb{R}^+$ and ${\bf y} \in {\bf R}^n$. Employing Theorem
\ref{C-distinct} (i), we get $|H-\lambda_1I| = 0$, and thus
$(H-\lambda_iI){\bf x} = O$ has non-zero real solutions, because $H$
is real symmetric. Hence, ${\bf y} \in {\bf R}^n$. As shown in
Corollary \ref{R-distinct}, ${\bf x} = b{\bf y}$ is the eigenvector
of $f(A)$ associated with eigenvalue $f(\lambda_1) = {\bf y}^T{\bf
x}$. Since $f(H)$ is also real symmetric, then  ${\bf x}, {\bf y}
\in {\bf R}^n$ and thus $b \in \mathbb{R} \backslash \{0\}$. Assume
that $b$ is negative. Then
\begin{equation}\label{fH-R}
f(H) = b{\bf y}{\bf y}^T = -(\sqrt{-b}{\bf y})(\sqrt{-b}{\bf y}^T) =
-\alpha\alpha^T \;\; \mbox{with}\;\; \alpha=\sqrt{-b}{\bf y}.
\end{equation}
Let $\alpha = (a_1,a_2,\cdots,a_n)$ with $a_i \in \mathbb{R}$ ($1
\leq i \leq n$). Hence, by \eqref{fH-R} we get
\begin{equation}\label{trfH<0}
{\rm tr}(f(H))= -\sum_{i=1}^na_i^2 < 0.
\end{equation}
On the other hand, recall that the eigenvalues of $f(H)$ are
$f(\lambda_1) = \prod_{i=1}^2(\lambda_1-\lambda_i)>0$ with ${\rm
alg}(f(\lambda_1)) = 1$ and $0$ with ${\rm alg}(0) = n-1$. Hence,
${\rm tr}(f(H)) = f(\lambda_1)>0$ contradicting to \eqref{trfH<0}.
Therefore, $b \in \mathbb{R}^+$.
\end{proof}

\begin{re}\label{two-methods}
Alternatively, we can adopt a rather different method to prove the
necessity of Theorem \ref{C-distinct}, which will be used in Lemma
\ref{C-S-distinct} in Section 4. But, the reader will see that the
method to show the necessity of Theorem \ref{C-distinct} is not
suitable for Lemma \ref{C-S-distinct}, and that it can provide more
information (see the corollary below).
\end{re}

We now look back on Proposition \ref{Blambda-fBlambda} again. Most
notably, its {\it converse} is not generally true. As an example, if
we set $f(x)=x^3+x^2+6$ and
$$B=\begin{pmatrix}
    1 & 1 & 2\\
    0 & -1 & 0\\
    2 & 0 & -1
    \end{pmatrix}\!\!,
\;\;\mbox{then}\;\; f(B)=\begin{pmatrix}
    16 & 5 & 10\\
    0 & 6 & 0\\
    10 & 0 & 6
    \end{pmatrix}\!\!.
$$
Easily to obtain $\sigma(B)=\{\pm\sqrt{5},-1\}$ and $\sigma(f(B)) =
\{11\pm\sqrt{5},6\}$. Obviously, $f(0)=6 \in \sigma(f(B))$, but $0
\not\in \sigma(B)$. Even so, the derivation from
\eqref{fhf=flambdaby} to \eqref{hy=lambday} of Theorem
\ref{C-distinct} provides a special case which makes the  {\it
converse} true.

\begin{cor}
Let $H \in \mathcal{M}_n(\mathbb{C})$ be an Hermitian matrix with
simple spectral radius $\lambda$ and minimal polynomial $m(x)$. Set
$f(x)=\frac{m(x)}{x-\lambda}$. If $\alpha$ is an eigenvector of
$f(H)$ associated with eigenvalue $f(\lambda)$, then $\alpha$ is an
eigenvector of $H$ associated with $\lambda$.
\end{cor}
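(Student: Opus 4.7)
The plan is to exhibit the corollary as an immediate consequence of the rank-one decomposition of $f(H)$ furnished by Theorem \ref{C-distinct}. First I would read off the distinct eigenvalues of $H$ from its minimal polynomial: since $H$ is Hermitian and hence diagonalizable, Proposition \ref{matrix-theory-diag}(iii) gives $m(x)=\prod_{i=1}^k(x-\lambda_i)$, where $\lambda=\lambda_1>\lambda_2>\cdots>\lambda_k$ are the distinct eigenvalues of $H$. Hence $f(x)=\prod_{i=2}^k(x-\lambda_i)$ coincides precisely with the polynomial featured in Theorem \ref{C-distinct}.

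Next I would apply Theorem \ref{C-distinct} to $H$: the hypotheses (Hermitian with simple spectral radius $\lambda$) are satisfied, and $H$ has exactly $k$ distinct eigenvalues. This produces $b \in \mathbb{C} \setminus \{0\}$ and a non-zero ${\bf y} \in {\bf C}^n$ with
\begin{equation*}
f(H) = b{\bf y}{\bf y}^\ast \quad \text{and} \quad H{\bf y} = \lambda {\bf y}.
\end{equation*}
Then I would analyze the $f(\lambda)$-eigenspace of $f(H)$. By Proposition \ref{Blambda-fBlambda}, the spectrum of $f(H)$ is $\{f(\lambda_1),\ldots,f(\lambda_k)\}$; the simpleness of $\lambda$, together with $f(\lambda_i)=0$ for $i\geq 2$, shows that $f(\lambda)$ occurs with algebraic multiplicity one. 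Because $f(H)$ is Hermitian by Proposition \ref{BH-fBH} and therefore diagonalizable, its geometric multiplicity is also one, and the displayed identity already places ${\bf y}$ in this eigenspace.

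Finally, any eigenvector $\alpha$ of $f(H)$ for $f(\lambda)$ must then be a non-zero scalar multiple of ${\bf y}$, and so $H\alpha = \lambda \alpha$, which is exactly the conclusion of the corollary. I do not anticipate any real obstacle: the entire argument is a straightforward extraction of information already contained in the proof of Theorem \ref{C-distinct}, which is precisely why Remark \ref{two-methods} emphasizes that the method of that proof provides more information than the bare statement of the theorem.
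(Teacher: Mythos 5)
Your argument is correct and is essentially the route the paper intends: the paper gives no separate proof, merely noting that the corollary is extracted from the derivation of \eqref{fhf=flambdaby}--\eqref{hy=lambday} inside the proof of Theorem \ref{C-distinct}, which is precisely the rank-one decomposition and one-dimensional $f(\lambda)$-eigenspace argument you reproduce. (An even shorter variant, implicit in that same derivation, is to apply $m(H)=(H-\lambda I)f(H)=O$ to $\alpha$ and cancel $f(\lambda)\neq 0$.)
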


\section{Applications to the Harary-Schwenk Problem}

We now apply Corollary \ref{R-distinct} to tackle the Harary-Schwenk
Problem. Clearly, the adjacency matrix $A$ is real symmetric. If $G$
is a connected graph, then $A$ is an irreducible nonnegative matrix.
By Perron-Frobenius Theorem we know that the $A$-spectral radius is
simple and its associated eigenvector is positive. Setting $\alpha =
\sqrt{b}{\bf y}$ in Corollary \ref{R-distinct}, we get the following
result.

\begin{thm}\label{A-distinct}
Let $G$ be connected graph of order $n \geq 2$. Then $G$ has exactly
$k$ $(2 \leq k \leq n)$ distinct $A$-eigenvalues if and only if
there are $k$ distinct real numbers $\lambda_1,
\lambda_2, \cdots,\lambda_k$ satisfying\\[-9mm]
\begin{itemize}
\item[$\mathrm{(i)}$]
$A-\lambda_iI$ is a singular matrix for $2 \leq i \leq k$;
\item[$\mathrm{(ii)}$]
$\prod_{i=2}^k(A-\lambda_iI) = \alpha\alpha^T$ and $A\alpha =
\lambda_1\alpha$, where $\alpha \in {\bf R}^n  \backslash \{\bf 0\}$.\\[-9mm]
\end{itemize}
Moreover, $\lambda_1, \lambda_2, \cdots,\lambda_k$ are exactly the
$k$ distinct $A$-eigenvalues of $G$.
\end{thm}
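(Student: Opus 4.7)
The plan is to deduce Theorem \ref{A-distinct} directly from Corollary \ref{R-distinct}, using the Perron--Frobenius theorem to supply the hypothesis that $A$ has simple spectral radius. The main steps, in order, would be the following.

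First, I would verify that $A$ meets the hypotheses of Corollary \ref{R-distinct}. Since $G$ is a simple undirected graph, $A$ is a real symmetric matrix, so it lies in $\mathcal{M}_{n}(\mathbb{R})$. Because $G$ is connected of order $n \ge 2$, $A$ is an irreducible nonnegative matrix, and hence by the Perron--Frobenius theorem its spectral radius $\lambda_1$ is a simple eigenvalue. Thus $A$ is a real symmetric matrix with simple spectral radius, and Corollary \ref{R-distinct} applies.

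Next, I would translate the conclusion of Corollary \ref{R-distinct} into the form stated in Theorem \ref{A-distinct}. For the necessity, Corollary \ref{R-distinct} yields $k$ distinct reals $\lambda_1,\lambda_2,\dots,\lambda_k$ with $A-\lambda_iI$ singular for $2 \le i \le k$, and a vector ${\bf y} \in {\bf R}^n \setminus \{{\bf 0}\}$ and a scalar $b \in \mathbb{R}^+$ such that
\[
\prod_{i=2}^k (A-\lambda_i I) = b\, {\bf y}{\bf y}^T, \qquad A{\bf y} = \lambda_1{\bf y}.
\]
Since $b > 0$, I would set $\alpha = \sqrt{b}\, {\bf y}$, so that $b\,{\bf y}{\bf y}^T = \alpha\alpha^T$ and $A\alpha = \sqrt{b}\, A{\bf y} = \lambda_1 \sqrt{b}\, {\bf y} = \lambda_1\alpha$, with $\alpha \in {\bf R}^n \setminus \{{\bf 0}\}$. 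This gives conditions (i) and (ii) of the theorem, and the moreover clause follows from the corresponding moreover clause of Corollary \ref{R-distinct}.

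For the sufficiency, the conditions (i) and (ii) of Theorem \ref{A-distinct} imply conditions (i) and (ii) of Corollary \ref{R-distinct} by simply taking ${\bf y} = \alpha$ and $b = 1 \in \mathbb{R}^+$, so Corollary \ref{R-distinct} immediately supplies the conclusion that $\lambda_1,\dots,\lambda_k$ are exactly the $k$ distinct $A$-eigenvalues of $G$. There is essentially no obstacle in this proof; the only substantive ingredient beyond the preceding corollary is invoking Perron--Frobenius to guarantee the simplicity of the spectral radius, which justifies absorbing the positive scalar $b$ into the eigenvector.
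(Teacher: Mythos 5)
Your proposal is correct and is essentially identical to the paper's own (very brief) argument: the paper likewise invokes the Perron--Frobenius theorem to guarantee that the $A$-spectral radius of a connected graph is simple, applies Corollary \ref{R-distinct}, and sets $\alpha = \sqrt{b}\,{\bf y}$ to absorb the positive scalar $b$. Your explicit treatment of the sufficiency direction (taking ${\bf y}=\alpha$, $b=1$) is a detail the paper leaves implicit, but it matches the intended reasoning.
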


van Dam \cite{dam-non-regular-three} used the following equality
$$(A-\lambda_2I)(A-\lambda_2) = \alpha\alpha^T, \quad
\mbox{with}\;\; A\alpha = \lambda_1\alpha,$$ to show that a regular
graph with three distinct $A$-eigenvalues must be strongly regular.
While, it is just the case $k=3$ in Theorem \ref{A-distinct}.

\begin{re}
Noteworthily, the case $k=n$ in Theorem \ref{A-distinct} is exactly
the answer to the Harary-Schwenk Problem, which offers an algebraic
characterization. Further investigation is how to determine the
structures of such graphs by the theorem, however, this will not be
as easy as it is seen.
\end{re}

\begin{re}
When $G$ is connected, the signless Laplacian matrix $Q$ has the
same properties as above those of adjacency matrix. So, Theorem
\ref{A-distinct} also holds for the signless Laplacian matrix.
\end{re}

In the end of this section, we give a new proof for the relation
between the diameter and the number of distinct $A$-eigenvalues of a
graph; see \cite{cve1} for example.

\begin{cor}
Let $G$ be a connected graph with $k$ distinct $A$-eigenvalues. Then
 the diameter $\mathrm{diam}(G)$ of $G$ is at most $k-1$.
\end{cor}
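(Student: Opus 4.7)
The plan is to derive the diameter bound directly from Theorem~\ref{A-distinct}(ii) rather than from the usual linear-independence argument for $I, A, A^2, \ldots, A^{\mathrm{diam}(G)}$. The key observation is that the product $\prod_{i=2}^{k}(A-\lambda_i I)$ is a polynomial in $A$ of degree exactly $k-1$, so its $(u,v)$-entry is a linear combination of the entries $(A^j)_{uv}$ for $0\le j\le k-1$.

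First, I would invoke Theorem~\ref{A-distinct}(ii) to write
\[
\prod_{i=2}^{k}(A-\lambda_i I) \;=\; \alpha\alpha^{T}, \qquad A\alpha=\lambda_1\alpha,
\]
and note that, since $G$ is connected, the adjacency matrix $A$ is irreducible and nonnegative, so by Perron--Frobenius the eigenvector $\alpha$ associated with the simple spectral radius $\lambda_1$ may be chosen strictly positive. In particular, $\alpha_u\alpha_v>0$ for every pair of vertices $u,v$.

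Next, I would argue by contradiction. Suppose $\mathrm{diam}(G)\ge k$, and pick vertices $u,v$ with $d(u,v)\ge k$. A standard fact about powers of the adjacency matrix is that $(A^j)_{uv}$ counts walks of length $j$ from $u$ to $v$, and hence vanishes whenever $j<d(u,v)$. Consequently $(A^j)_{uv}=0$ for all $0\le j\le k-1$. Since $\prod_{i=2}^{k}(A-\lambda_i I)$ expands as a polynomial in $A$ of degree $k-1$, its $(u,v)$-entry is a linear combination of these vanishing entries and must be $0$. On the other hand, by the displayed identity, this entry equals $\alpha_u\alpha_v>0$, a contradiction. Therefore $\mathrm{diam}(G)\le k-1$.

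I do not anticipate a real obstacle here: the only thing one has to check is the polynomial-degree count and the positivity of the Perron vector, both of which are essentially immediate from what has already been set up. The proof is short precisely because Theorem~\ref{A-distinct} already packaged the relevant algebraic identity, and combining it with the walk-counting interpretation of $A^j$ yields the diameter bound in a single contradiction step.
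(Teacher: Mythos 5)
Your proposal is correct and follows essentially the same argument as the paper: expand $\prod_{i=2}^{k}(A-\lambda_i I)$ as a degree-$(k-1)$ polynomial in $A$, note that the $(u,v)$-entry of $A^j$ vanishes for $j<d(u,v)$, and contradict the strict positivity of the entry $\alpha_u\alpha_v$ of $\alpha\alpha^T$ coming from the Perron vector.
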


\begin{proof}[\rm \bf Proof]
By Theorem  \ref{A-distinct} (ii) we get
\begin{equation}\label{bij}
\prod_{i=2}^k(A-\lambda_iI) =
A^{k-1}+a_1A^{k-2}+a_2A^{k-3}+\cdots+a_{k-2}A+a_{k-1}I
=\alpha\alpha^T = (b_{ij})_{n \times n}.
\end{equation}
Since $\alpha$ is a positive vector, then $b_{ij} > 0$. Assume that
$\mathrm{diam}(G) > k-1$. By the definition of diameter, for some
$v_i$ and $v_j$ the elements $a_{ij}^{(s)}$ from the $i$-th row and
from the $j$-column of the matrices $A^{(s)}$ ($1 \leq s \leq k-1$)
satisfy
$$a_{ij}^{(k-1)}=a_{ij}^{(k-2)}=\cdots=a_{ij}=0,$$
which together with \eqref{bij} results in $$b_{ij}=
a_{ij}^{(k-1)}+\alpha_{1}a_{ij}^{(k-2)}+\alpha_{2}a_{ij}^{(k-3)}
+\cdots+\alpha_{k-2}a_{ij}=0,$$ a contradiction. Hence,
$\mathrm{diam}(G) \leq k-1$.
\end{proof}

\section{Extensions of Harary-Schwenk Problem}

In this section we extend the Harary-Schwenk Problem to other graph
matrices. As is known to all, the positive semidefinite matrices
must be Hermitian, and the eigenvalues of such matrices are
nonnegative real numbers.

\begin{lem}\label{C-S-distinct}
Let $H \in \mathcal{M}_{n}(\mathbb{C})$ be a positive semidefinite
matrix with simple least eigenvalue. Then $H$ has exactly $k$ $(2
\leq k \leq n)$ distinct eigenvalues if and only if there are $k$
distinct real numbers $\mu_1, \mu_2, \cdots, \mu_{k}$ satisfying
\begin{itemize}
\item[$\mathrm{(i)}$]
$H-\mu_iI$ is a singular matrix for $2 \leq i \leq k$;
\item[$\mathrm{(ii)}$]
$\prod_{i=2}^k(H-\mu_iI) =
\frac{\prod_{i=1}^{k-1}(\mu_k-\mu_i)}{\|\alpha\|_2^2}\alpha\alpha^\ast$
and $H\alpha = \mu_k\alpha$, where $\alpha \in {\bf C}^n
\backslash \{\bf 0\}$.\\[-8mm]
\end{itemize}
Moreover, $\mu_1, \mu_2, \cdots,\mu_k$ are exactly the $k$ distinct
eigenvalues of $H$.
\end{lem}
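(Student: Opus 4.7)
The plan is to mirror the template of Theorem \ref{C-distinct}, but with the designated simple eigenvalue now at the bottom of the spectrum, using the spectral-decomposition method foreshadowed in Remark \ref{two-methods}. The explicit normalization $\|\alpha\|_2^2$ in the target formula makes the projector decomposition the natural tool: the coefficient and the correct power of $\|\alpha\|_2^2$ emerge directly from it, which is not the case for the rank-one factorization trick used in Theorem \ref{C-distinct}.

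For the necessity, I would enumerate the distinct eigenvalues so that the simple one is isolated from the others, then apply Proposition 2.4 to decompose $H = \sum_j \mu_j E_j$ with mutually orthogonal spectral projectors summing to $I$; the projector corresponding to the simple eigenvalue has the form $\beta\beta^\ast$ for a unit eigenvector $\beta$. The functional-calculus identity $g(H) = \sum_j g(\mu_j)\, E_j$ — an immediate consequence of Proposition \ref{Blambda-fBlambda} applied on each eigenspace — then collapses the product $g(H)$, taken with $g$ the monic polynomial whose roots are the non-simple eigenvalues, to a single rank-one term proportional to $\beta\beta^\ast$. Rescaling $\beta$ to an arbitrary nonzero eigenvector $\alpha$ of the simple eigenvalue via $\beta = \alpha/\|\alpha\|_2$ produces precisely the identity in (ii), with coefficient equal to $g$ evaluated at the simple eigenvalue.

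For the sufficiency, I would adapt the closing paragraph of the proof of Theorem \ref{C-distinct}. Condition (i) exhibits the associated $\mu_i$'s as eigenvalues of $H$ via nonzero kernel vectors, and the eigenequation in (ii) together with the rank-one shape of its right-hand side pins down the simple eigenvalue and its eigenvector, giving at least $k$ distinct eigenvalues. To exclude an extra eigenvalue $\mu_{k+1}$, note that Proposition \ref{matrix-rank-one} forces the right-hand side of (ii) to have rank exactly one, so $g(H)$ has exactly one nonzero eigenvalue by Proposition \ref{Blambda-fBlambda}; a putative $\mu_{k+1}$ distinct from all $\mu_i$ would satisfy $g(\mu_{k+1}) \neq 0$ and contribute a second nonzero eigenvalue to $g(H)$, a contradiction.

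The main obstacle, as signaled by the authors in Remark \ref{two-methods}, is that the rank-one factorization argument used in Theorem \ref{C-distinct} — which recovered the eigenequation $H{\bf y} = \lambda_1 {\bf y}$ by a diagonalization argument producing ${\bf x} = b {\bf y}$ — does not by itself extract the exact scalar $\prod_{i=1}^{k-1}(\mu_k-\mu_i)/\|\alpha\|_2^2$ demanded here. The spectral-decomposition route is essentially unavoidable at this level of precision. A secondary subtlety is that this coefficient has sign $(-1)^{k-1}$ and so cannot, in contrast to Corollary \ref{R-distinct}, be absorbed into $\alpha$ via a real square root. The positive semidefiniteness hypothesis itself plays no role in the algebra; it is imposed only so that ``least eigenvalue'' is a natural reference point for the intended applications to the (normalized) Laplacian matrix.
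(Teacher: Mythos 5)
Your proof is correct, but it takes a genuinely different route to the necessity than the paper does. The paper never invokes the orthogonal spectral resolution $H=\sum_j\mu_jE_j$: it reads off the minimal polynomial $m(x)=(x-\mu_k)\prod_{i=1}^{k-1}(x-\mu_i)$ from Proposition \ref{matrix-theory-diag}(iii), notes that $m(H)=O$ forces every column of $P:=\prod_{i=1}^{k-1}(H-\mu_iI)$ into $\ker(H-\mu_kI)=\mathbb{C}\alpha$ (since $\mathrm{geo}(\mu_k)=1$), so that $P=\alpha(b_1,\dots,b_n)$, and then left-multiplies by $\alpha^\ast$ (using $\alpha^\ast(H-\mu_iI)=(\mu_k-\mu_i)\alpha^\ast$) to solve for the $b_i$ and obtain exactly the coefficient $\prod_{i=1}^{k-1}(\mu_k-\mu_i)/\|\alpha\|_2^2$. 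So your claim that the spectral-decomposition route is ``essentially unavoidable'' at this level of precision is an overstatement: the paper's column/kernel argument extracts the same scalar using only the minimal polynomial and the simplicity of $\mu_k$. What your route buys is a one-line collapse $g(H)=\sum_jg(\mu_j)E_j=g(\mu_k)\beta\beta^\ast$ that makes the sign $(-1)^{k-1}$ of the coefficient immediately visible; what the paper's buys is independence from the full projector decomposition. The sufficiency arguments coincide (rank one forces a unique nonzero eigenvalue of $g(H)$, excluding a $(k+1)$-st eigenvalue of $H$), and your sketch is in fact cleaner than the paper's, whose closing paragraph carries over stray $\lambda_i$'s and an unwarranted ``$0$ is an eigenvalue'' from Theorem \ref{C-distinct}.

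Two small points. First, both you and the paper's own derivation take the product over the non-simple eigenvalues, i.e.\ $\prod_{i=1}^{k-1}(H-\mu_iI)$ with $\mu_k$ the simple least eigenvalue; the displayed statement's $\prod_{i=2}^{k}(H-\mu_iI)$ is an indexing slip (read literally, the left-hand side annihilates $\alpha$ while the right-hand side sends $\alpha$ to $\prod_{i=1}^{k-1}(\mu_k-\mu_i)\alpha\neq 0$). Your phrasing ``the monic polynomial whose roots are the non-simple eigenvalues'' resolves this correctly, but you should say so explicitly. Second, your observations that positive semidefiniteness plays no role in the algebra and that the negative sign of the coefficient (for even $k$ it is positive, for odd $k$ negative, in general $(-1)^{k-1}$) prevents absorbing it into $\alpha$ as in Corollary \ref{R-distinct} are both accurate and worth keeping.
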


\begin{proof}[\rm \bf Proof]
Let $\mu_1 > \mu_2 > \cdots > \mu_{k-1} > \mu_k$ be the $k$ distinct
eigenvalues of $H$. Since $H$ is diagonalizable, by Proposition
\ref{matrix-theory-diag} (iii) we get that the minimal polynomial of
$H$ is $$m(x)= (x-\mu_k)(x-\mu_1)\cdots(x-\mu_{k-1}),$$ which leads
to
$$(H-\mu_kI)\prod_{i=1}^{k-1}(H-\mu_iI)=0.$$
Let $\alpha = (a_1,a_2,\cdots,a_n)$ be the eigenvector of $H$
associated to eigenvalue $\mu_k$. Since ${\rm alg}(\mu_k) =1$, then
by Proposition \ref{matrix-theory-diag} (ii) we have ${\rm
geo}(\mu_k) = 1$, which indicates that any eigenvector of $H$
associated to the eigenvalue $\mu_k$ is a scalar multiple of
$\alpha$. Hence, each column of matrix $\prod_{i=1}^{k-1}(H-\mu_iI)$
can be written in the form $b_i\alpha$ with $b_i \in \mathbb{C}$
($i=1,2,\cdots,n$), and so
\begin{equation}\label{b1bn}
\prod_{i=1}^{k-1}(H-\mu_iI) = \alpha(b_1,b_2,\cdots,b_n).
\end{equation}
Since $\alpha^\ast(H-\mu_iI) = \alpha^\ast H-\mu_i\alpha^\ast =
(\mu_k-\mu_i)\alpha^\ast$, multiplying $\alpha^\ast$ to both sides
of \eqref{b1bn}, we obtain
$$\prod_{i=1}^{k-1}(\mu_k-\mu_i)\alpha^\ast =
\alpha^\ast\alpha(b_1,b_2,\cdots,b_n) =
\|\alpha\|_2^2(b_1,b_2,\cdots,b_n).$$ Thereby,
$$b_i = \frac{\prod_{i=1}^{k-1}(\mu_k-\mu_i)}{\|\alpha\|_2^2}a_i,
\;\;i=1,2,\cdots,n.$$ Hence, the necessity follows.

The proof of the sufficiency is similar to that of Theorem
\ref{C-distinct}. From (i) it follows that the system of homogeneous
linear equations $(H-\mu_iI){\bf x}=O$ has a non-zero solution, say
$\alpha_i$, and thus $H\alpha_i = \mu_i\alpha_i$ which indicates
that $\mu_i$ is an eigenvalue of matrix $H$ ($2 \leq i \leq k$).
From (ii) we get that $0$ is an eigenvalue of $H$. Therefore, we
have shown that $H$ has $k$ distinct eigenvalues $\mu_1, \mu_2,
\cdots,\mu_{k-1}, 0$. Assume that $H$ has an extra eigenvalue
$\mu_{k+1}$. Recall that $f(x) = \prod_{i=1}^{k-1}(x-\lambda_i)$ and
that $f(\mu_i)$ ($1\leq i \leq k+1$) is the eigenvalue of $f(H)$.
Obviously, $f(\lambda_i) = 0$ ($1 \leq i \leq k-1$), $f(0) \neq 0$
and $f(\mu_{k+1}) \neq 0$. By (ii) and Proposition
\ref{matrix-rank-one}, the rank of $f(H)$ is one, and so $f(H)$ has
only one none-zero simple eigenvalue, a contradiction.

This finishes the proof.
\end{proof}

\begin{re}
Go back to Remark \ref{two-methods}. Applying the method used in
Lemma \ref{C-S-distinct} to\\[2mm] the necessity of Theorem
\ref{C-distinct}, we get
$b=\frac{\prod_{i=2}^k(\lambda_1-\lambda_i)}{\|\alpha\|_2^2}>0$.
\end{re}

The following corollary immediately follows from Lemma
\ref{C-S-distinct}.

\begin{cor}\label{R-S-distinct}
Let $H \in \mathcal{M}_{n}(\mathbb{R})$ be a positive semidefinite
matrix with simple least eigenvalue. Then $H$ has exactly $k$ $(2
\leq k \leq n)$ distinct eigenvalues if and only if there are $k$
distinct real numbers $\mu_1, \mu_2, \cdots, \mu_{k}$ satisfying\\[-7mm]
\begin{itemize}
\item[$\mathrm{(i)}$]
$H-\mu_iI$ is a singular matrix for $2 \leq i \leq k$;
\item[$\mathrm{(ii)}$]
$\prod_{i=2}^k(H-\mu_iI) =
\frac{\prod_{i=1}^{k-1}(\mu_k-\mu_i)}{\|\alpha\|^2}\alpha\alpha^T$
and $H\alpha=\mu_k\alpha$, where $\alpha \in {\bf R}^n  \backslash \{\bf 0\}$.\\[-7mm]
\end{itemize}
Moreover, $\mu_1, \mu_2, \cdots,\mu_k$ are exactly the $k$ distinct
eigenvalues of $H$.
\end{cor}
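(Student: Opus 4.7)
The plan is to deduce Corollary \ref{R-S-distinct} directly from Lemma \ref{C-S-distinct} by observing that every object appearing in the lemma can be taken to be real when $H$ is real symmetric. Thus no genuinely new argument is needed; the corollary is really a "reality check" on the complex result already in hand.

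For the necessity, I would start by invoking Lemma \ref{C-S-distinct} applied to $H$, viewed as an element of $\mathcal{M}_n(\mathbb{C})$, to obtain the $k$ distinct real numbers $\mu_1>\mu_2>\cdots>\mu_k$ (condition (i) is identical in the real and complex formulations) and a nonzero vector $\alpha\in\mathbb{C}^n$ with $H\alpha=\mu_k\alpha$ satisfying the rank-one identity
\[
\prod_{i=2}^{k}(H-\mu_iI)=\frac{\prod_{i=1}^{k-1}(\mu_k-\mu_i)}{\|\alpha\|_2^2}\,\alpha\alpha^\ast .
\]
Now the key observation is that since $\mu_k$ is simple and $H-\mu_kI$ is a \emph{real} matrix, the one-dimensional null space of $H-\mu_kI$ admits a nonzero real basis vector (solve $(H-\mu_kI)x=0$ over $\mathbb{R}$ by Gaussian elimination). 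I would replace the complex eigenvector from the lemma by such a real representative, which is permissible because the right-hand side of the identity depends on $\alpha$ only through the orthogonal projector $\alpha\alpha^{\ast}/\|\alpha\|_2^2$ onto the one-dimensional $\mu_k$-eigenspace, and this projector is unchanged when $\alpha$ is replaced by any other nonzero vector in the same line. With $\alpha\in\mathbb{R}^n$ the conjugate transpose $\alpha^\ast$ collapses to $\alpha^T$, $\|\alpha\|_2^2=\alpha^T\alpha\in\mathbb{R}^+$, and the identity takes exactly the form asserted in (ii) of the corollary.

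For the sufficiency, the hypotheses of Corollary \ref{R-S-distinct} are a special case of the hypotheses of Lemma \ref{C-S-distinct} (a real $\alpha$ is in particular a complex $\alpha$, and $\alpha\alpha^T=\alpha\alpha^\ast$ when $\alpha$ is real); hence the sufficiency direction transfers verbatim, yielding that $\mu_1,\ldots,\mu_k$ are precisely the $k$ distinct eigenvalues of $H$.

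There is essentially no obstacle here; the whole content is the realness of the eigenvector, and that is a standard consequence of the fact that the kernel of a real matrix has a real basis. If anything warrants care, it is only making explicit that the rank-one factorization is invariant under rescaling of the chosen eigenvector, so that we may freely swap the complex $\alpha$ supplied by Lemma \ref{C-S-distinct} for a real one without disturbing the displayed identity.
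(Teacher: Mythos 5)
Your proof is correct and takes essentially the same route as the paper: the paper offers no argument beyond stating that the corollary ``immediately follows'' from Lemma \ref{C-S-distinct}, and your observations --- that the one-dimensional kernel of the real matrix $H-\mu_kI$ admits a real basis vector, and that the right-hand side of (ii) depends on $\alpha$ only through the projector $\alpha\alpha^{\ast}/\|\alpha\|_2^2$, hence is unchanged by swapping in that real representative --- are exactly the justification the paper leaves implicit. No gap.
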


It is generally known that the Laplacian matrix $L$ of a graph $G$
is real symmetric and positive semidefinite. Moreover,  $0$ is the
least $L$-eigenvalue with eigenvector $\alpha^T=(1,1,\cdots,1) \in
{\bf R}^n$, and ${\rm alg}(0)$ is equal to the number of the
connected components of $G$. So, if $G$ is connected, then $0$ is a
simple $L$-eigenvalue. Consequently, substituting $\mu_k = 0$,
$\alpha\alpha^T = J$ (the {\it all-ones matrix}) and $\|\alpha\|^2 =
n$ into Corollary \ref{R-S-distinct} we get the following result.

\begin{thm}
Let $G$ be a connected graph with order $n$. Then $G$ has $k$ $(2
\leq k \leq n)$ distinct $L$-eigenvalues if and only if there are
$k-1$ distinct non-zero real
numbers $\mu_1, \mu_2, \cdots, \mu_{k-1}$ satisfying\\[-7mm]
\begin{itemize}
\item[$\mathrm{(i)}$]
$L-\mu_iI$ is a singular matrix for $2 \leq i \leq k$;
\item[$\mathrm{(ii)}$]
$\prod_{i=2}^k(L-\mu_iI)
=(-1)^{k-1}\frac{\prod_{i=1}^{k-1}\mu_i}{n}J$, where $J$ is the all-ones matrix.\\[-8mm]
\end{itemize}
Moreover, $\mu_1, \mu_2, \cdots,\mu_{k-1},0$ are exactly the $k$
distinct $L$-eigenvalues of $G$.
\end{thm}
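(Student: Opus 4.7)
The plan is to specialize Corollary \ref{R-S-distinct} to $H = L$, the Laplacian matrix, using three standard facts: $L$ is real symmetric; $L$ is positive semidefinite (because $x^T L x = \sum_{uv \in E(G)}(x_u-x_v)^2 \geq 0$); and for connected $G$ the kernel of $L$ is one-dimensional and spanned by the all-ones vector $\mathbf{e} = (1,1,\ldots,1)^T$, so the least eigenvalue $0$ is simple. Thus $L$ satisfies every hypothesis of Corollary \ref{R-S-distinct}, with the corollary's $\mu_k$ and $\alpha$ being forced to equal $0$ and $\mathbf{e}$ respectively.

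For the necessity, I would order the $k$ distinct $L$-eigenvalues as $\mu_1 > \mu_2 > \cdots > \mu_{k-1} > \mu_k = 0$, yielding the $k-1$ non-zero numbers in the theorem statement. Condition (i) of Corollary \ref{R-S-distinct} then is exactly condition (i) of the theorem (the case $i=k$ reducing to the automatic singularity of $L$). Using $\alpha\alpha^T = \mathbf{e}\mathbf{e}^T = J$ and $\|\alpha\|^2 = n$, condition (ii) of the corollary becomes
\begin{equation*}
\prod_{i=2}^{k}(L-\mu_i I) \;=\; \frac{\prod_{i=1}^{k-1}(\mu_k-\mu_i)}{n}\,J \;=\; (-1)^{k-1}\,\frac{\prod_{i=1}^{k-1}\mu_i}{n}\,J,
\end{equation*}
which is exactly condition (ii) of the theorem.

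For the sufficiency, conditions (i) and (ii) of the theorem are precisely the hypotheses of the converse half of Corollary \ref{R-S-distinct} after we set $\mu_k = 0$ and $\alpha = \mathbf{e}$; note that $L\mathbf{e} = 0 = \mu_k\mathbf{e}$ is automatic, and the scalar on the right side of (ii) matches $\prod_{i=1}^{k-1}(\mu_k-\mu_i)/\|\alpha\|^2$ under this substitution. The corollary then concludes that $\mu_1,\ldots,\mu_{k-1},0$ are exactly the $k$ distinct eigenvalues of $L$. I do not expect any real obstacle; the argument is a routine specialization, and the only bookkeeping subtlety is tracking the sign factor $(-1)^{k-1}$ produced when each $\mu_k-\mu_i = -\mu_i$ is pulled out of the product.
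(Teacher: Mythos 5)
Your proposal is correct and is essentially identical to the paper's own argument: the paper likewise obtains this theorem by substituting $\mu_k=0$, $\alpha\alpha^T=J$ and $\|\alpha\|^2=n$ into Corollary \ref{R-S-distinct}, using that $L$ is real symmetric, positive semidefinite, and has $0$ as a simple least eigenvalue with eigenvector $(1,\ldots,1)^T$ when $G$ is connected. Your explicit tracking of the sign $(-1)^{k-1}$ and of the automatic conditions $L\mathbf{e}=0$ and the singularity of $L-\mu_kI$ is just a more detailed write-up of the same specialization.
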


We finally turn to the {\it normalized Laplacian matrix} of a graph
$G$, which is introduced by Chung \cite{chung-book} and defined as
$$\mathcal{L} = D^{-\frac{\;1}{2}}LD^{\frac{\;1}{2}}. $$
Mohar \cite{mohar-some} calls this matrix the {\it transition
Laplacian}. As a fact, $\mathcal{L}$ is also real symmetric and
positive semidefinite. Moreover, if $G$ is connected, $0$ is a
simple least $\mathcal{L}$-eigenvalue with eigenvector $\alpha^T =
(\sqrt{d_1},\sqrt{d_2},\cdots,\sqrt{d_n})$, where $d_i$ is the
degree of vertex $v_i$ ($1 \leq i \leq n$). Substituting $\mu_k = 0$
and $\|\alpha\|^2 = \sum_{i=1}^nd_i$ into Corollary
\ref{R-S-distinct} we obtain the following results.

\begin{thm}
Let $G$ be a connected graph with order $n$ and size $m$. Then $G$
has $k$ distinct $\mathcal{L}$-eigenvalues if and only if there are
$k-1$ $(2 \leq k \leq n)$ distinct non-zero real numbers $\mu_1,
\mu_2,\cdots,\mu_{k-1}$ satisfying\\[-7mm]
\begin{itemize}
\item[$\mathrm{(i)}$]
$\mathcal{L}-\mu_iI$ is a singular matrix for $2 \leq i \leq k$;
\item[$\mathrm{(ii)}$]
$\prod_{i=2}^k(\mathcal{L}-\mu_iI)
=(-1)^{k-1}\frac{\prod_{i=1}^{k-1}\mu_i}{2m}\alpha\alpha^T$, where
$\alpha^T=(\sqrt{d_1},\sqrt{d_2},\cdots,\sqrt{d_n})$.\\[-8mm]
\end{itemize}
Moreover, $\mu_1, \mu_2, \cdots,\mu_{k-1},0$ are exactly the $k$
distinct $\mathcal{L}$-eigenvalues of $G$.
\end{thm}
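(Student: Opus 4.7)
The plan is to obtain this theorem as a direct specialization of Corollary~\ref{R-S-distinct} to the matrix $H=\mathcal{L}$, so the work consists mostly of (a) verifying that the hypotheses of that corollary hold for the normalized Laplacian of a connected graph, and (b) simplifying the right-hand side of item (ii) once the specific eigenvalue/eigenvector data are plugged in. Both directions of the equivalence come bundled together from the corollary, so there is no need to treat necessity and sufficiency separately.

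First, I would record the standing facts about $\mathcal{L}$ that are invoked in the paragraph just preceding the theorem: $\mathcal{L}$ is real symmetric and positive semidefinite, and for a connected graph $G$ the least $\mathcal{L}$-eigenvalue is $0$ and it is simple. The specific eigenvector I need is $\alpha=(\sqrt{d_1},\sqrt{d_2},\ldots,\sqrt{d_n})^T$; a one-line check $\mathcal{L}\alpha=D^{-1/2}L D^{-1/2}\cdot D^{1/2}\mathbf{1}=D^{-1/2}L\mathbf{1}=\mathbf{0}$ confirms this. Then by the handshake identity
\begin{equation*}
\|\alpha\|^{2}=\sum_{i=1}^{n}d_{i}=2m,
\end{equation*}
which is the only numerical input beyond what Corollary~\ref{R-S-distinct} already provides.

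Next, I would apply Corollary~\ref{R-S-distinct} to $\mathcal{L}$, taking the role of the simple least eigenvalue ``$\mu_{k}$'' in that corollary to be $0$, and relabelling the remaining $k-1$ distinct eigenvalues as the nonzero real numbers $\mu_{1},\mu_{2},\ldots,\mu_{k-1}$ of the present statement. Condition (i) of the corollary translates verbatim into condition (i) here. For condition (ii), the coefficient in front of $\alpha\alpha^{T}$ simplifies as
\begin{equation*}
\frac{\prod_{i=1}^{k-1}(\mu_{k}-\mu_{i})}{\|\alpha\|^{2}}
=\frac{\prod_{i=1}^{k-1}(0-\mu_{i})}{2m}
=(-1)^{k-1}\frac{\prod_{i=1}^{k-1}\mu_{i}}{2m},
\end{equation*}
which is precisely the scalar appearing in the statement. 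The ``moreover'' clause is inherited directly from the ``moreover'' clause of Corollary~\ref{R-S-distinct}.

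There is essentially no serious obstacle: the real content is already packaged in Corollary~\ref{R-S-distinct}, and what remains is the bookkeeping of matching indices and a short calculation identifying $\alpha$ and $\|\alpha\|^{2}$. The one spot that warrants a moment of care is the sign $(-1)^{k-1}$, which must be tracked when replacing $\prod(\mu_{k}-\mu_{i})$ by $\prod(-\mu_{i})$; beyond that, no additional argument is required.
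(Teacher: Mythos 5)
Your proposal is correct and follows exactly the route the paper takes: the theorem is obtained by substituting $\mu_k=0$, the eigenvector $\alpha^T=(\sqrt{d_1},\ldots,\sqrt{d_n})$, and $\|\alpha\|^2=\sum_i d_i=2m$ into Corollary~\ref{R-S-distinct}, with the sign $(-1)^{k-1}$ arising from $\prod_{i=1}^{k-1}(0-\mu_i)$. Your explicit verification that $\mathcal{L}\alpha=0$ and that the hypotheses of the corollary hold is slightly more careful than the paper's one-line justification, but the argument is the same.
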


\end{document}